\newtheorem{theorem}{Theorem}[section]
\newtheorem{proposition}[theorem]{Proposition}
\newtheorem{corollary}[theorem]{Corollary}
\newtheorem{remark}[theorem]{Remark}
\theoremstyle{definition}
\newtheorem{definition}[theorem]{Definition}
\numberwithin{equation}{section}
\begin{document}

\newcommand{\bb}{\mathfrak{b}}
\newcommand{\cc}{\mathfrak{c}}
\newcommand{\N}{\mathbb{N}}
\newcommand{\R}{\mathbb{R}}
\newcommand{\A}{{\mathbb{R}}_+\cup\{0\}}
\newcommand{\forces}{\Vdash}
\newcommand{\LL}{\mathbb{L}}
\newcommand{\K}{\mathbb{K}}
\newcommand{\Q}{\mathbb{Q}}

\newcommand{\To}{\longrightarrow}

\title{A continuous image of a Radon-Nikod\'{y}m compact space which is not Radon-Nikod\'{y}m}
\author{Antonio Avil\'{e}s}
\thanks{First author by was 
supported by MEC and FEDER (Project MTM2008-05396), Fundaci\'{o}n S\'{e}neca (Project 08848/PI/08), {\em Ramon y Cajal} contract (RYC-2008-02051) and an FP7-PEOPLE-ERG-2008 action.}
\email{avileslo@um.es}
\address{Departamento de Matem\'{a}ticas, Universidad de Murcia, 30100 Murcia (Spain)}

\author{Piotr Koszmider}
\thanks{The second author was partially supported by the National Science Center research grant DEC-2011/01/B/ST1/00657.  He also expresses his gratitude to the Functional Analysis group in Murcia
for constant and ongoing support which included organizing several visits to the University of Murcia and made this research possible.} 
\email{P.Koszmider@Impan.pl}
\address{Institute of Mathematics, Polish Academy of Sciences,
ul. \'Sniadeckich 8,  00-956 Warszawa, Poland}

%
\subjclass{}
%
%
%
\begin{abstract} 
We construct a continuous image of a Radon-Nikod\'{y}m compact space which is not Radon-Nikod\'{y}m compact, solving the problem posed in the 80ties by Isaac Namioka. 
\end{abstract}

\maketitle

\markright{}

\section{Introduction}

Recall that a Banach  space $X$ has the Radon-Nikod\'{y}m property if and only if the Radon-Nikod\'{y}m theorem
holds for vector measures with values in $X$ (see \cite{DiestelUhl}). This property plays a central role in
the theory of vector measures. 
It has been clear for long time that dual Banach spaces with the Radon-Nikod\'{y}m property and their
weak$^*$ compact subsets play special role in this theory  \cite{Reynov,StegallRNII}.
Isaac Namioka \cite{Namioka} defined a compact space to be  Radon-Nikod\'{y}m compact (or RN for short)
if and only if it is homeomorphic to a weak$^\ast$ compact subset of a dual  Banach space with the Radon-Nikod\'{y}m property. For example, as reflexive Banach spaces are dual spaces with  the Radon-Nikod\'ym property,
the results of \cite{factor} imply that Eberlein compact spaces are RN compact.
Already in \cite{Namioka} a number of interesting properties of RN compacta are proven,
as well as an elegant internal characterization of RN compacta is given. 
The investigation of this class of compact spaces continued later, with some remarkable results like the relation with Corson and Eberlein compact spaces \cite{OriSchVal, StegallRN}.

But the question which has attracted more attention and produced a larger literature on
RN compacta is the following very basic problem, already posed in \cite{Namioka}
and traced in \cite{FabHeiMat} to \cite{Gr}  which has remained open up to this date:\\

\begin{center}

\emph{Is the class of RN compact spaces closed under continuous images?}

\ \\
\end{center}

A number of partial positive results to the above question of continuous images of RN compacta have been proven. If $L$ is a continuous image of an RN compact space, then $L$ is RN compact if any of the following conditions hold: 
\begin{enumerate}
\item $L$ is almost totally disconnected  \cite{Arvanitakis}, meaning that $L\subset [0,1]^I$ and for every $x\in L$, $|\{i\in I : x_i\in (0,1)\}|\leq \omega$. This includes in particular the cases when $L$ is zero-dimensional (attributed independently to Reznichenko \cite{ArkhangelskiiGT2}) and when $L$ is Corson \cite{StegallRN}, and less obviously also the case when $L$ is linearly ordered \cite{RNorder}.
\item The weight of $L$ is less than cardinal $\mathfrak{b}$ \cite{RNcompact}.
\item $L$ is the union of two RN compact subspaces $L = L_1\cup L_2$ and some special hypothesis hold, like $L_1\cap L_2$ being metrizable, $G_\delta$ or scattered, or when $L_1$ is a retract of $L$ or when $L\setminus L_1$ is scattered \cite{MatSte}.
\end{enumerate} 

Other articles devoted to the problem of the continuous image include \cite{ArvAvi, FabHeiMat, IanWat, Namiokanote}. More information can be found in \cite{AviKal, FabianWA, Fabiansurvey,Namiokasurvey}, which are dedicated to the topic, or contain sections dedicated to it. The purpose of this article is to provide a negative solution to the general problem:

\begin{theorem}\label{RNtheorem}
There exists a continuous surjection $\pi:\LL_0\To \LL_1$ such that $\LL_0$ is a zero-dimensional RN compact space but $\LL_1$ is not RN compact.
\end{theorem}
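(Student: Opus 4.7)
The plan is to construct $\LL_0$ as a zero-dimensional subspace of $\{0,1\}^I$ cut out by a carefully chosen combinatorial family indexed by $I$, and to define $\LL_1$ as a quotient of $\LL_0$ by an equivalence relation that identifies certain pairs (or small clusters) of points. Given the partial positive results listed in the introduction, the construction must avoid every available shortcut: $\LL_1$ cannot be zero-dimensional (Reznichenko), cannot be Corson or almost totally disconnected (Arvanitakis), cannot be linearly ordered, and must have weight at least $\mathfrak{b}$. So $\LL_1$ will be genuinely higher-dimensional, of weight at least $\mathfrak{b}$, and its non-RN character will come from a combinatorial obstruction that survives the quotient but is invisible at the level of $\LL_0$.

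For the positive part — showing $\LL_0$ is RN compact — the natural tool is Namioka's internal characterization by a lower semicontinuous fragmenting metric. Since $\LL_0$ sits inside $\{0,1\}^I$, I would try a weighted sum of the form $d(x,y)=\sum_n 2^{-n}|x_{i_n}-y_{i_n}|$ taken along a well-chosen well-ordering of a countable generating subfamily of coordinates, or some close variant. Fragmentability then reduces to a combinatorial statement about the generating family: every nonempty relatively closed subset of $\LL_0$ can be cut by finitely many coordinate conditions into a piece of small $d$-diameter. Making this work will almost certainly force the combinatorial family to have a tree-like or almost-disjoint structure, and the indexing set $I$ to be of size controlled by cardinal invariants of the continuum so that a single countable generating subfamily suffices.

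The main obstacle, by a wide margin, is to show that $\LL_1$ is \emph{not} RN compact. The strategy I would pursue is by contradiction: assume that $\rho$ is a lower semicontinuous metric on $\LL_1$ that fragments it. Using the combinatorics built into $I$ together with a $\Delta$-system lemma, or a Ramsey-style extraction on an uncountable family of test points which are glued together by $\pi$, one should try to produce a nonempty closed subset of $\LL_1$ every relatively open piece of which still contains two $\rho$-separated identified pairs, contradicting fragmentability. Calibrating the combinatorial structure on $I$ so that this extraction succeeds on the quotient while the weighted metric continues to fragment $\LL_0$ itself is precisely where the tension of the construction lies, and this delicate balance — positive on $\LL_0$, negative on $\LL_1$ — is what will occupy essentially the whole paper and has presumably been what blocked a solution since the 1980s.
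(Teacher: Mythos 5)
Your proposal is a research plan rather than a proof: it names the two tasks (fragment $\LL_0$, obstruct $\LL_1$) but supplies neither the construction nor either argument, and the places where it does get specific are the places where it goes wrong. For the positive half, a metric of the form $d(x,y)=\sum_n 2^{-n}|x_{i_n}-y_{i_n}|$ over a countable "generating" subfamily of coordinates cannot work: for $d$ to be a metric the countable family must separate points of $\LL_0$, which would make $\LL_0$ (and hence $\LL_1$) metrizable and therefore RN. More importantly, you miss the structural insight the paper is built on: one should \emph{not} try to exhibit an l.s.c.\ fragmenting metric on $\LL_0$ directly. The paper instead constructs a fragmenting metric that is only \emph{Reznichenko} (not l.s.c.), which yields quasi-RN, and then invokes Arvanitakis' theorem that zero-dimensional quasi-RN compacta are RN --- this is exactly why $\LL_0$ is arranged to be zero-dimensional, and why the same construction pushed down to $\LL_1$ (where fibers over the split points are intervals rather than Cantor sets) escapes the positive results.

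For the negative half, your plan is to contradict fragmentability of a hypothetical l.s.c.\ metric $\delta$ on $\LL_1$ by a $\Delta$-system/Ramsey extraction producing a closed set with no small open pieces. The paper's mechanism is different and more delicate: fragmentability is used \emph{positively}, to assign to each isolated point $a$ a coordinate pattern $s(a)$ along which a pair of points has $\delta$-distance less than $4^{-|t|}$; a combinatorial guessing property of the underlying scattered "basic space" (realized via Bernstein sets in ZFC, or a ladder system under $\diamondsuit$) then produces a single point $x$ whose ladder hits the correct pattern on every level $t$ simultaneously; lower semicontinuity transfers the small distances to the limit points $x\oplus t^0$, $x\oplus t^1$; and finally the triangle inequality telescoped along the $2^m$ \emph{consecutive} dyadic intervals of level $m$ forces $\delta(x\oplus 0,x\oplus 1)=0$, contradicting that $\delta$ is a metric. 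This last step is precisely what distinguishes $[0,1]$-fibers from $\Delta$-fibers (in the Cantor set the level-$m$ intervals are not consecutive, so the telescoping fails), and nothing in your sketch identifies this mechanism, the guessing property that feeds it, or the resolution construction that splits the points of $B$ into intervals in the first place. As it stands, the proposal does not contain a proof of either direction.
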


This contrasts with other similar classes of compact spaces arising in functional analysis, like Eberlein compacta (weakly compact subsets of Banach spaces) or Corson compacta (compact subsets of $\Sigma$-products), for which the stability under continuous images happened to be a nontrivial fact, but was finally shown to hold true in \cite{BRW} and \cite{CorsonImages} respectively.
The class of RN compact spaces, on the other hand, does show other permanence properties present also for Eberlein  compact
spaces and many other classes of compact spaces playing important roles in Banach space theory. Namely, there
is an isomorphism invariant class of Banach spaces (of Asplund generated spaces) associated
with it  in the sense that if
 $K$ is an RN compact  then, the space $C(K)$ of real valued continuous functions on $K$ is an Asplund generated space, and if $X$ is
an Asplund generated space, then the dual ball $B_{X^*}$ is RN compact.

A version of the above question on continuous images of RN compacta
 on the Banach space level, i.e., if subspaces of Asplund generated
spaces are Asplund generated was answered in the negative already 30 years ago in \cite{StegallRNII}.
In this language our result is equivalent to constructing a  subspace $Y\subseteq X$
of an Asplund generated space $X$ such that the dual ball $B_{Y^*}$ is not RN compact (see 
\cite{FabHeiMat}). Note that Stegall's argument from \cite{StegallRNII} is far from achieving this,
as it uses Rosenthal's non WCG subspace of a WCG space from \cite{rosenthal}, but
by \cite{BRW}
the dual unit ball of the subspace is even an Eberlein compactum and so RN compact. The point here is
that $B_{X^*}$ may be RN compact for $X$ not Asplund generated but
$B_{{C(K)^*}}$ is RN compact if and only if $K$ is RN compact if and only if $C(K)$ is Asplund generated.
We also have a similar chain of equivalences for RN replaced by a continuous image of
RN and Asplund generated replaced by  a subspace of Asplund generated (see 
\cite{FabianWA,FabHeiMat, sigmaasplund}).
It follows that both classes of RN compact spaces and their continuous images are stable under taking isomorphism of their space of continuous functions, meaning that
\begin{enumerate}
\item If $L$ is RN compact and $C(K)$ is isomorphic to $C(L)$, then $K$ is also RN compact.
\item  If $L$ is a continuous image of an RN compact space and $C(K)$ is isomorphic to $C(L)$, then $K$ is also a continuous image of an RN compact space.
\end{enumerate}

 Now, if we combine these facts with the already mentioned result that an almost
 totally disconnected image of an RN compactum is RN compact, we obtain a remarkable consequence of our example:

\begin{corollary}
The space $C(\LL_1)$ is not isomorphic to any $C(K)$ where $K$ is almost totally disconnected.
\end{corollary}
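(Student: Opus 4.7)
The plan is to argue by contradiction, chaining together three results that the introduction has already collected: Arvanitakis' theorem on almost totally disconnected images, the two isomorphism-stability facts for $C(K)$ spaces (items (1) and (2) just above the corollary), and Theorem~\ref{RNtheorem} itself.

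Suppose, toward a contradiction, that there exists an almost totally disconnected compact space $K$ with $C(K)$ isomorphic to $C(\LL_1)$. By Theorem~\ref{RNtheorem}, $\LL_1$ is a continuous image of the RN compact space $\LL_0$. Applying stability fact (2) of the introduction, which says that being a continuous image of an RN compactum is preserved under linear isomorphism of the Banach spaces of continuous functions, we conclude that $K$ is also a continuous image of some RN compact space.

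Now invoke Arvanitakis' result (item (1) in the list of positive partial answers): a continuous image of an RN compact space which is almost totally disconnected must itself be RN compact. Hence $K$ is RN compact. Applying stability fact (1), which transfers the property of being RN compact through an isomorphism of $C(K)$ spaces, we deduce that $\LL_1$ is RN compact, contradicting Theorem~\ref{RNtheorem}.

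Since all three ingredients are cited as known in the excerpt, there is no real obstacle; the only thing to double-check is that the quoted stability of the class of continuous images of RN compacta under $C(K)$-isomorphisms is stated in exactly the direction needed (from $\LL_1$ to $K$), which is precisely what item (2) asserts. The proof is therefore a short diagram chase, with Theorem~\ref{RNtheorem} doing all the genuinely new work.
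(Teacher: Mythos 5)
Your argument is correct and is exactly the one the paper intends: combine the isomorphism-stability facts (1) and (2) with Arvanitakis' theorem on almost totally disconnected images and the non-RN-ness of $\LL_1$ from Theorem \ref{RNtheorem}. No difference from the paper's (implicit) proof.
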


The question whether there could exist a compact space $L$ such that $C(L)$ is not isomorphic to any $C(K)$ with $K$ totally disconnected has been a long standing open problem motivated by the
Bessaga Milutin Pe\l czy\'nski  classification of 
separable Banach spaces of the form $C(K)$. It was first solved in the negative by the second author in \cite{Koszmider}. However the example obtained there (and others which have been constructed later with similar techniques like in \cite{Plebanek}) is very different from this one, because in that case $C(L)$ was
an indecomposable Banach space. This in particular means, on the level of 
compact space  $L$, that it contains no convergent sequences
and is strongly rigid (all nonidentity continuous maps from $L$ into itself are constant) as shown in
\cite{irina}. Moreover the dual ball $B_{C(L)^*}$ with the weak$^*$ topology satisfies a strong
rigidity condition (23 of \cite{Koszmiderracsam}).
However, our space $\LL_1$ has many nontrivial continuous transformations into itself and
as  a continuous image of an RN compactum, it is sequentially compact \cite{Namioka} and hence $C(\LL_1)$ contains many infinite-dimensional co-infinite-dimensional complemented subspaces. Thus the fact that a $C(K)$ space is not isomorphic to any $C(L)$ for $L$
totally disconnected does not imply properties of spaces from \cite{Koszmider} like indecomposability or
not being isomorphic to its hyperplanes and the geometry of such a space can be quite nice.\\

Let us now explain the main idea of our construction. For this we need a bunch of definitions.

\begin{definition}\label{maindefinitions} Let $K$ be a topological space and $d:K^2\rightarrow {{\mathbb{R}}_+\cup\{0\}}$
be a metric on the set $K$ (not related to the topology on $K$).
\begin{enumerate}
\item We say that $d$ fragments $K$ if and only if for every $\varepsilon>0$ and every closed $F\subseteq K$
there is an open $U\subseteq K$ such that $U\cap F\neq\emptyset$ and 
$$diam_d(U\cap F)=\sup\{d(x,y) : x,y\in U\cap F\}<\varepsilon.$$
\item If $K',K''\subseteq K$, then $d(K', K'')=\inf\{d(x,y): x\in K', y\in K''\}$,
\item We say that $d$ is lower semi continuous (l.s.c.) if and only if given distinct $x, y\in K$
and $0<\delta<d(x,y)$,  there are open $U\ni x$ and $V\ni y$ such that $d(U,V)>\delta$.
\item We say that $d$ is Reznichenko  if and only if given distinct $x, y\in K$
  there are open $U\ni x$ and $V\ni y$ such that $d(U,V)>0$.
\end{enumerate} 
\end{definition}

Fragmentability was formally introduced in \cite{JayRog} and its relation to RN compacta comes from the fact that every bounded subset of a dual space with the Radon-Nikod\'{y}m property is fragmented by the dual norm \cite{NamPhe,StegallIsrael}.
A compact space $K$ is an RN compact space if and only if there is  an l.s.c. metric on $K$ which fragments $K$ \cite{Namioka}. Compact spaces which are fragmented by a Reznichenko metric constitute a superclass of RN compact spaces, sometimes called strongly fragmentable compact spaces \cite{Fabiansurvey,Namiokasurvey}, but which coincides with the class of quasi RN compact spaces introduced by  Arvanitakis \cite{Arvanitakis} by a result of Namioka \cite{Namiokanote} (cf. also \cite{Fabiansurvey}). What we need to know about quasi RN compacta is
that the above mentioned result of Arvanitakis  applies to them, that is, totally disconnected
quasi RN compacta are RN compacta \cite{Arvanitakis}.

The main insight that leads to  the construction is to see how to destroy the l.s.c. property of a metric
without destroying the Reznichenko property. This is described in Propositions \ref{preserving},
\ref{destroying}. It is done by a ``smart" replacement of some point by the  unit interval
and can be interpreted as an operation of the so called resolution of a topological space. A central role
of this method in topology is claimed in \cite{Watson} where it is traced back to \cite{fedorchuk}.
It is probably not a coincidence that the spaces constructed in \cite{Koszmider} can also be viewed as
obtained by versions of resolutions. We start with an RN compactum which is
simple modifications of appropriate scattered space of height 3, just to make our resolutions
powerful enough.
Then we carefully do as many resolutions of nonisolated points as necessary to destroy all l.s.c. metrics i.e.,
to make sure that the resulted space is not  RN compact. We need to predict all these l.s.c. metrics
using a combinatorial or a descriptive set-theoretic tool. Finally
it turns out that not only the space
remains with a Reznichenko metric after all these resolutions but also 
its standard totally disconnected preimage maintains through the resolutions
 a metric which fragments it. So, it is enough to use the above mentioned result of Arvanitakis to conclude that
this totally disconnected preimage is  RN compact. 

The structure of the paper is as follows: In Section \ref{notation} we introduce some basic notation. In Section \ref{basicspace} we present what we call a \emph{basic space}, the starting point of our construction. In Section \ref{proofsection} we explain how to obtain a surjection $\pi:\LL_0\To \LL_1$ like in Theorem \ref{RNtheorem} from a basic space. Finally in Sections \ref{ZFCexample} and \ref{diamondexample} we provide two different ways of constructing a basic space. The first one is based on a version of the Ciesielski-Pol compact space \cite{CiePol} and can be done within ZFC without additional axioms. The second construction is based on ladder systems on $\omega_1$  (see \cite{shura}, \cite{pollindelof}) and assumes $\diamondsuit$. We found of interest to include the  construction under $\diamondsuit$ as well because it has additional properties, for instance separable subspaces of $\LL_0$ and $\LL_1$ are metrizable.\\

The compact spaces that we construct have weight $\mathfrak c$ but we do not know if perhaps $\mathfrak{b}$ is the optimal weight of a counterexample to the problem. The reader can find in \cite{AviKal} and \cite{Fabiansurvey} a number of interesting problems on RN compacta that still remain open. For example, we may mention that it is unknown if every RN compact space is the continuous image of a zero-dimensional RN compact space, or if it is always homeomorphic to a subspace of the space of probability measures on a scattered space. We do not know as well whether the class of continuous images of RN compact spaces coincides with that of  quasi RN compact spaces. It would be also interesting to find counterexamples to restricted forms of the continuous image problem, like the union of two RN compact spaces not to be RN, or the convex hull of an RN compact space not to be RN.

\section{Some notations}\label{notation}

By $\Delta = 2^\mathbb{N}$ we will denote the Cantor set, the set of all infinite sequences of 0's and 1's endowed with the  topology induced by the metric $\rho:\Delta\times\Delta\To \mathbb{R}$ given by
$$\rho(x,y) = 2^{-\max\{k: x_k\neq y_k\}}$$

By $T=2^{<\omega}$ we denote the set of all finite sequences of 0's and 1's. 
For $t\in T$ by $|t|$ we denote the cardinality of $t$, that is, its length.
If $t= (t_1,\ldots,t_n)\in T$ and $s=(s_1,\ldots)\in T\cup \Delta$, we denote $t^\frown s = (t_1,\ldots,t_n,s_1,s_2,\ldots)$. If $t=(t_1,t_2\ldots)\in T\cup \Delta$ and $s=(s_1,s_2,\ldots)\in T\cup \Delta$, $t<s$ refers to the lexicographical order, so it means that there exists $k$ such that $t_k<s_k$ but $t_i=s_i$ for $i<k$.\\

Given $s,t\in T$, we consider the continuous function $\Gamma_s^t:\Delta\To \Delta$ defined as:
\begin{itemize}
\item $\Gamma_s^t(z) = t^\frown (0,0,0,\ldots)$ if $z < s$,

\item $\Gamma_s^t(s^\frown \lambda) = t^\frown \lambda$ for every $\lambda\in \Delta$,

\item $\Gamma_s^t(z) = t^\frown (1,1,1,\ldots)$ if $z > s$.\\
\end{itemize}

The function $q:\Delta\To [0,1]$ is the standard continuous surjection given by $$q(t_1,t_2,\ldots) = \sum_{k=1}^{\infty}\frac{t_k}{2^k}$$
Notice that $q$ transfers the lexicographical order of $\Delta$ to the usual order of $[0,1]$, in the sense that $x\leq y$ implies that $q(x)\leq q(y)$.

\section{The starting basic space}\label{basicspace}

We shall call a \emph{basic space} a compact scattered space $K$ which can be written as $K = \bigcup_{n\in\mathbb{N}}A_n \cup B \cup C$ satisfying the following properties

\begin{enumerate}
\item All points of $A=\bigcup_n A_n$ are isolated in $K$.
\item For every $x\in B$ there exists an infinite set $C_x\subset A$ such that $\overline{C_x} = C_x\cup\{x\}$ and moreover, $\overline{C_x}$ is open in $K$.
\item There exists a function $\psi:B\To \mathbb{N}^\mathbb{N}$ such that: Given any family $\{X_m^n : m,n\in\mathbb{N}\}$ of subsets of $A$ with $A_n = \bigcup_m X_m^n$ for every $n$, there exists $x\in B$ such that\footnote{we denote by $\psi(x)[n]$ the evaluation on $n$ of the function $\psi(x):\mathbb{N}\To\mathbb{N}$}  $C_x\cap  X^n_{\psi(x)[n]}$ is infinite for all $n$.
\end{enumerate}

\section{How to obtain the desired continuous image from a basic space}\label{proofsection}

The first step is to consider the compact space $L$ obtained from the basic space $K$ by substituting each point of $A$ by a copy of the Cantor set $\Delta$. That is,

$$ L = (A\times \Delta) \cup B \cup C$$

A basic neighborhood of a point $(a,t)$ is of the form $\{a\}\times U$ where $U$ is a neighborhood of $t$ in $\Delta$. A basic neighborhood of a point $x\in B\cup C$ is of the form $((U\cap A)\times \Delta) \cup U\setminus A$, where $U$ is a neighborhood of $x$ in $K$.\\

We shall use the countable set $T=2^{<\omega}$ instead of $\mathbb{N}$ in order to describe the basic space $K$. So we shall write $A=\bigcup_{t\in T}A_t$ instead $A = \bigcup_{n\in\mathbb{N}}A_n$, and the last condition on our basic space will be now read as:\\

\begin{itemize}
\item[(3')] There exists a function $\psi:B\To T^T$ such that: Given any family $\{X_s^t : s,t\in T\}$ of subsets of $A$ with $A_t = \bigcup_s X_s^t$ for every $t$, there exists $x\in B$ such that $C_x\cap X^t_{\psi(x)[t]}$ is infinite for all $t\in T$.\\
\end{itemize}

For every $x\in B$ we consider a continuous function $g_x: L\setminus \{x\} \To \Delta$ defined in the following way:
\begin{enumerate}
\item $g_x(y) = 0$ whenever $y\not\in C_x\times \Delta$, $y\neq x$,
\item $g_x(a,z) = \Gamma_{\psi(x)[t]}^t(z)$ for $a\in A_t\cap C_x$, $z\in\Delta$.
\end{enumerate} 
We also consider $f_x:L\setminus\{x\}\To [0,1]$, $f_x = q\circ g_x$.\\

Now, we are in a position to define the announced $\pi:\LL_0\To \LL_1$. Let

$$\LL_0 = \left\{[u,v] \in L\times \Delta^B : g_x(u) = v_x \text{ for all } x\in B\setminus\{u\}\right\}$$
$$\LL_1 = \left\{[u,v] \in L\times [0,1]^B : f_x(u) = v_x \text{ for all } x\in B\setminus\{u\}\right\}$$
$$\pi[u,v] = [u, q(v_x)_{x\in B}].$$

Notice an important fact about the structure of $\LL_0$ and $\LL_1$. When $u\in L\setminus B$, there is a unique point of $\LL_i$ of the form $[u,v]$. However, when $u\in B$, the set $\{[u,v]\in \LL_i\}$ is homeomorphic to $\Delta$ when $i=0$ and to $[0,1]$ when $i=1$, because all coordinates $v_x$ are determined by $u$ as $v_x = g_x(u)$ (or $f_x(u)$) when $x\neq u$, but $v_u$ can take any value from $\Delta$ (or $[0,1]$). In this way, we can think that we have \textit{splitted} each point of $B$ into a Cantor set (or into an interval) \textit{following} the functions $g_x$ (or respectively $f_x$).

\begin{proposition}\label{preserving}
$\LL_0$ is  RN compact.
\end{proposition}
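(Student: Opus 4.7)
The strategy is to apply Arvanitakis's theorem cited in the introduction: a totally disconnected compact space that is fragmented by a Reznichenko metric is RN compact. So I verify both properties for $\LL_0$.

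First, $\LL_0$ is zero-dimensional. Indeed $K$ is a scattered compact Hausdorff space and hence zero-dimensional; replacing isolated points of $K$ by copies of the Cantor set preserves zero-dimensionality in $L$, since the basic neighborhoods described in Section~\ref{proofsection} are products of clopen sets. The factor $\Delta^B$ is zero-dimensional as a product, so $\LL_0 \subseteq L \times \Delta^B$ is zero-dimensional.

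Next, I construct a Reznichenko metric $d$ on $\LL_0$ that fragments it. Define first on $L$:
\[
d_L(x,y) = \begin{cases} 0 & \text{if } x = y, \\ \rho(z,z') & \text{if } x = (a,z), y = (a,z') \in A\times\Delta, \\ 1 & \text{otherwise.} \end{cases}
\]
This is a lower semicontinuous metric that fragments $L$: any closed $F \subseteq L$ either meets a fibre $\{a\}\times\Delta$, and one extracts a small-diameter piece using the fragmentability of $\Delta$ by $\rho$; or $F \subseteq B\cup C$ and one uses the scatteredness of $B\cup C$ (inherited from $K$) to pick an isolated point of $F$. On $\LL_0$ I set
\[
d([u,v],[u',v']) = d_L(u,u') + \chi_B(u,u') \cdot \rho(v_u, v'_u),
\]
where $\chi_B(u,u')=1$ if $u=u'\in B$ and $0$ otherwise. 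The triangle inequality is immediate since the second summand vanishes whenever $u\neq u'$. Fragmentation of $\LL_0$ by $d$ reduces to that of $L$ by $d_L$: fragment $\pi_L(F)$ in $L$, and if the resulting small $L$-slice reduces to a single point $x\in B$, further fragment its Cantor preimage in $\LL_0$ via the $v_x$-coordinate.

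The hard step is the Reznichenko property. For distinct $[u,v],[u',v']$ with $u\neq u'$, one uses that $K$ is compact Hausdorff with $\overline{C_x}$ clopen to conclude that $C_x \cap C_{x'}$ is finite for distinct $x, x'\in B$, so one can pick disjoint basic clopen neighborhoods $V,V'$ of $u,u'$ in $L$ whose $A$-parts are disjoint; the pullbacks to $\LL_0$ then have $d$-distance $1$, except when $u,u'$ both lie in a common fibre $\{a\}\times\Delta$, in which case two disjoint Cantor clopens inside $\Delta$ give a positive $\rho$-distance bound. The delicate case is $u=u'\in B$ with $v_u\neq v'_u$. Let $p$ be the length of the longest common prefix of $v_u$ and $v'_u$ in $\Delta=2^\mathbb{N}$; take basic clopens $W,W'$ in $\Delta$ of depth larger than $p$ around $v_u$ and $v'_u$, and a basic clopen $V\ni u$ in $L$; set $U = \{[u'',v'']\in \LL_0 : u''\in V,\ v''_u\in W\}$ and $U'$ analogously with $W'$. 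The worrisome subcase is that $[u_1,v_1]\in U$ and $[u_2,v_2]\in U'$ project to two points $(a,z_1),(a,z_2)$ in a common fibre with $a\in C_u$; writing $t_a$ for the unique index with $a\in A_{t_a}$, the conditions $g_u((a,z_i)) = \Gamma^{t_a}_{\psi(u)[t_a]}(z_i) \in W$ resp.\ $W'$, together with the fact that the range of $\Gamma^t_s$ lies in $\{t{}^\frown\lambda : \lambda\in\Delta\}$, force $t_a$ to be simultaneously a prefix of the defining strings of $W$ and $W'$, and hence a prefix of $v_u$ of length $\leq p$. So $t_a$ takes at most $p+1$ values, and for each the preimages $(\Gamma^{t_a}_{\psi(u)[t_a]})^{-1}(W), (\Gamma^{t_a}_{\psi(u)[t_a]})^{-1}(W')$ are disjoint clopens in the compact $\Delta$ and hence at positive $\rho$-distance; the minimum over these finitely many values is positive, giving the required positive lower bound on $d(U,U')$.

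The finite-prefix restriction on $t_a$ enforced by the structure of the $\Gamma$-functions is precisely what makes the Reznichenko verification go through, and it is exactly where the careful construction of $g_u$ as $\Gamma^t_{\psi(u)[t]}$ on each stratum $A_t\cap C_u$ pays off. Arvanitakis's theorem then concludes that $\LL_0$ is RN compact.
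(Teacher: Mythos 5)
Your proposal follows essentially the paper's route: your metric coincides with the paper's $d$ (value $\rho$ on pairs sharing a fibre of $A\times\Delta$ or sharing a first coordinate in $B$, value $1$ otherwise), your fragmentation argument is the paper's (fibre case via $\rho$; otherwise pick an isolated point of the projection into the scattered set $B\cup C$ and, if it lies in $B$, cut further in the $v_x$-coordinate), and the conclusion is the same combination of Namioka (fragmented by a Reznichenko metric $\Rightarrow$ quasi-RN) and Arvanitakis (zero-dimensional quasi-RN $\Rightarrow$ RN). The one place you genuinely diverge is the delicate Reznichenko case $u=u'\in B$: the paper argues by contradiction, extracting convergent subsequences and using the continuity of $\Gamma^t_{\psi(x)[t]}$ together with the fact that the ranges $t^\frown\Delta$ shrink as $|t|\to\infty$; you instead choose $W,W'$ of depth exceeding the splitting level $p$ of $v_u,v'_u$, note that only the finitely many prefixes $t$ of length at most $p$ can send a fibre into both $W$ and $W'$ (since the range of $\Gamma^t_s$ lies in $t^\frown\Delta$), and bound $d(U,U')$ below by the minimum of the positive distances between the finitely many pairs of disjoint clopen preimages. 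This direct quantitative argument is correct and exploits the same structural feature, giving an explicit lower bound where the paper only derives positivity; your auxiliary observations (zero-dimensionality of $\LL_0$, finiteness of $C_x\cap C_{x'}$ for distinct $x,x'\in B$) are also correct, although for the case $u\neq u'$ it suffices to lift disjoint clopen subsets of $K$.

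One step is missing entirely: you never verify that $\LL_0$ is compact, i.e.\ closed in $L\times\Delta^B$. The paper does this first: if $[u,v]\notin\LL_0$, there is $x\neq u$ with $g_x(u)\neq v_x$; separating $g_x(u)$ and $v_x$ by disjoint open $V,W\subseteq\Delta$ and taking a neighborhood $U$ of $u$ with $x\notin U$ and $g_x(U)\subseteq V$, the set of all $[u',v']$ with $u'\in U$ and $v'_x\in W$ is a neighborhood of $[u,v]$ missing $\LL_0$. This check is routine but necessary, since both the notion of RN compactness and the Namioka--Arvanitakis results you invoke presuppose compactness; with it added, your argument is sound.
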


\begin{proof}
We check first that $\LL_0$ is closed in $K\times \Delta^B$, hence compact. So fix $[u,v]\in K\times \Delta^B \setminus \LL_0$ and we find a neighborhood of $[u,v]$ disjoint from $\LL_0$. Since $[u,v]\not\in \LL_0$, there exists $x\neq u$ such that $g_x(u) \neq v_x$. Let $V$ and $W$ be disjoint open neighborhoods in $\Delta$ of $g_x(u)$ and $v_x$ respectively. Let $U$ be a neighborhood of $u$ in $L$ such that $g_x(U)\subset V$ and $x\not\in U$. The neighborhood we are looking for is $$\tilde{\mathbb{U}} = \{[u',v']\in L\times \Delta^B : u'\in U, v'_x\in W\}.$$ 
Indeed, if $[u',v']\in\tilde{\mathbb{U}}$, then $x\neq u'$ since $x\not\in U$, but $g_x(u')\neq v'_x$ because $g_x(u')\in V$ while $v'_x\in W$.\\ 

Consider the following metric $d:\LL_0\times \LL_0 \To [0,1]$:
\begin{enumerate}
\item $d([u,v],[u,v]) = 0$,
\item $d([u,v],[u,v']) = \rho(v_u,v'_u)$ if $u\in B$,
\item $d([u,v],[u',v']) = \rho(r,r')$ if $u,u'\in A\times\Delta$, $u=(a,r)$, $u'=(a,r')$,
\item $d([u,v],[u',v']) = 1$ in any remaining case when $[u,v]\neq [u',v']$.\\
\end{enumerate}

\noindent\emph{Claim 1.} The metric $d$ fragments $\LL_0$.\par
\noindent Recall the definition of fragmentability (1) \ref{maindefinitions} and consider a  nonempty $Y\subset \LL_0$,

\begin{enumerate}
\item If $Y$ contains a point of the form $[u,v]$, with $u=(a,r)\in A\times \Delta$, then take $U$ a neighborhood of $r$ in $\Delta$ of $\rho$-diameter less than $\varepsilon$, and then $V=\{[u,v] : u=(a,s), s\in U\}$ is a neighborhood of $[u,v]$ of $d$-diameter less than $\varepsilon$.

\item If $Y$ does not contain any point as in the previous case, then $u\in B\cup C$ for all $[u,v]\in Y$. Since $B\cup C\subset K$ is scattered, we can find $u^0$ an isolated point of the set $Z = \{u\in B\cup C : \exists v\ [u,v]\in Y\}$. Suppose $[u^0,v^0]\in Y$, let $U$ be a neighborhood of $u^0$ in $L$ that isolates $u^0$ inside $Z$, and $W$ a neighborhood of $v^0_{u^0}$ in $\Delta$ of $\rho$-diameter less than $\varepsilon$. Then
$$V = \{[u,v]\in Y : u\in U, v_{u^0}\in W\} \subset \{[u_0,v] : v_{u^0}\in W\}$$
is a nonempty relative open subset of $Y$ of $d$-diameter less than $\varepsilon$.\\
\end{enumerate}

\noindent\emph{Claim 2.} The metric $d$ is a Reznichenko metric. \par
\noindent By (4) of \ref{maindefinitions} to prove that $d$ is Reznichenko, given $[u^0,v^0]\neq [u^1,v^1]$, we must find neighborhoods $U$ and $V$ of $[u^0,v^0]$ and $[u^1,v^1]$ respectively such that
$$d(U,V) = \inf\{d(z,z') : z\in U, z'\in V\}>0$$
We distinguish several cases:
\begin{enumerate}
\item If $u^0,u^1\in A\times \Delta$, $u^0=(a,r)$, $u^1=(a,r')$, then we can take $J$ and $J'$ neighborhoods of $r$ and $r'$ respectively at positive $\rho$-distance, and then take $U = \{[(a,s),v]\in\LL_0 : s\in J\}$ and $V = \{[(a,s),v]\in\LL_0 : s\in J'\}$.
\item In any other case when $u^0\neq u^1$, we can take neighborhoods $G$ and $G'$ of $u^0$ and $u^1$ such that $d([u,v],[u',v']) = 1$ whenever $u\in G$ and $u'\in G'$.
\item If $u^0 = u^1= x \in B$, we consider $G$ and $G'$ disjoint clopen neighborhoods of $v^0_{x}$ and $v^1_{x}$ respectively inside $\Delta$. Let $W = (C_x\times \Delta)\cup \{x\}$ which is a clopen neighborhood of $x$ in $L$. We claim that
$U = \{[u,v] : u\in W, v_x\in G\}$ and $V =\{[u,v] : u\in W, v_x\in G'\}$ are at a positive $d$-distance as required. If they were not, we could find sequences $e_n\in U$ and $\tilde{e}_n\in V$ such that $d(e_n,\tilde{e}_n)\rightarrow 0$. We can suppose that $e_n=[(a_n,z_n),v^n]$, $\tilde{e}_n = [(a_n,\tilde{z}_n),\tilde{v}^n)]$, the $a_n$ is the same in both cases since otherwise $d(e_n,\tilde{e}_n)=1$. By passing to a subsequence we can suppose that $v^n_x\rightarrow w\in G$ and $\tilde{v}_x^n\rightarrow \tilde{w}\in G'$. By passing to a further subsequence, we can reduce this case to  one of the following two subcases:
\begin{enumerate}
\item either there is a $t$ such that $a_n\in A_t$ for all $n$. Then $v^n_x = \Gamma^t_{\psi(x)[t]}(z_n)$ and $\tilde{v}^n_x = \Gamma^t_{\psi(x)[t]}(\tilde{z}_n)$. Since $\rho(z_n,\tilde{z}_n) = d(e_n,\tilde{e}_n)\rightarrow 0$, the continuity of $\Gamma^t_{\psi(x)[t]}$ implies that $\rho(w,\tilde{w})=0$, a contradiction. 
\item or $a_n\in A_{t_n}$ and $|t_n|\rightarrow \infty$. In that case, the $\rho$-diameter of $\Gamma^{t_n}_{\psi(x)[t_n]}(\Delta) = \{t_n^\frown \lambda : \lambda\in \Delta\}$ tends to 0 as well. Again, this implies that $\rho(v^n_x,\tilde{v}^n_x)\rightarrow 0$ and $w=\tilde{w}$, a contradiction.

\end{enumerate}

\end{enumerate}
\ \\

Every compact space fragmented by a Reznichenko metric is quasi-RN \cite{Namiokanote}, and every zero-dimensional quasi-RN compact
space is RN compact \cite{Arvanitakis}.

\end{proof}

\begin{remark}{\rm There is  a quite natural way of redefining the metric $d$ on the pairs
$[u,v], [u, v']$ for $u\in B$ and $v\in \Delta$ to obtain an l.s.c. quasimetric (see \cite{Arvanitakis}) on $\LL_0$ which could give another 
proof of the RN property following the results of \cite{Arvanitakis}}
\end{remark}

\begin{proposition}\label{destroying}
$\LL_1$ is not  RN compact.
\end{proposition}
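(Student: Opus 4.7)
The plan is to suppose for contradiction that $d$ is a lower semi-continuous metric that fragments $\LL_1$, and to derive a contradiction by forcing some fiber $I_x := \{[x,v] \in \LL_1 : v_x \in [0,1]\} \cong [0,1]$ to have $d$-diameter zero. The strategy parallels Proposition \ref{preserving}, but exploits the fact that each fiber $I_x$ above $x \in B$ is now a connected interval rather than a totally disconnected Cantor set; the failure of l.s.c.\ is expected to be the obstruction.

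Fix $\varepsilon > 0$. For each $a \in A_t$, the clopen set $\Delta_a := \{[(a,z),v] \in \LL_1 : z \in \Delta\}$ is homeomorphic to the Cantor set and is fragmented by $d$; any small-$d$-diameter relatively open subset contains a basic cylinder $s^\frown \Delta$, so I can choose $s(a) \in T$ such that $\{[(a,z),v] : z \in s(a)^\frown \Delta\}$ has $d$-diameter less than $\varepsilon$. Setting $X^t_s := \{a \in A_t : s(a) = s\}$ gives a partition $A_t = \bigsqcup_{s \in T} X^t_s$. Property (3') of the basic space then produces $x \in B$ with $C_x \cap X^t_{\psi(x)[t]}$ infinite for every $t \in T$. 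Fixing $t$ and enumerating this intersection as $(a_n)$ (so $a_n \to x$ in $L$), I consider, for any $\lambda_1, \lambda_2 \in \Delta$, the points $y_n^i := [(a_n, \psi(x)[t]^\frown \lambda_i), v^{n,i}]$. Both lie in the small-diameter cylinder above $a_n$ (because $s(a_n) = \psi(x)[t]$), so $d(y_n^1, y_n^2) < \varepsilon$. A direct computation gives $f_x(a_n, \psi(x)[t]^\frown \lambda_i) = q(\Gamma^t_{\psi(x)[t]}(\psi(x)[t]^\frown \lambda_i)) = q(t^\frown \lambda_i) = q(t) + 2^{-|t|} q(\lambda_i)$, and continuity of the maps $f_y$ at $x$ for $y \neq x$ ensures that $y_n^i \to [x, v^{r_i}]$ in the topology of $\LL_1$, with $r_i := q(t) + 2^{-|t|} q(\lambda_i)$. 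Lower semi-continuity then yields $d([x, v^{r_1}], [x, v^{r_2}]) \leq \varepsilon$ for all $r_1, r_2 \in [q(t), q(t) + 2^{-|t|}]$; in particular, taking $t = \emptyset$ yields $\mathrm{diam}_d(I_x) \leq \varepsilon$.

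The hard part is converting this bound into an actual contradiction, since the $x$ produced by (3') depends on $\varepsilon$ through the partition, and $\mathrm{diam}_d(I_x) \leq \varepsilon$ alone is compatible with $d$ being a small but positive metric on $I_x$. To close the argument, I would apply (3') to a refined partition that simultaneously encodes a sequence of scales $\varepsilon_1 > \varepsilon_2 > \cdots$; since $T$ is countable and admits an injection $T^{<\omega} \hookrightarrow T$, one can coherently code, at each $a$, the choices $s_{\varepsilon_k}(a)$ at finitely many scales into a single symbol of $T$, or distribute different scales across different $t$-indices. A single $x$ selected by (3') should then satisfy the diameter bound at every scale, forcing $\mathrm{diam}_d(I_x) = 0$, contradicting that $d$ is a metric on the nontrivial interval $I_x \cong [0,1]$. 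The principal technical obstacle will be arranging the encoding so that the single value $\psi(x)[t] \in T$ delivers cylinders that can be used simultaneously in the lower semi-continuity argument at every scale—keeping careful track of how different $s$-values above the same $a$ need not nest—and this is where the specific structural features of the basic space constructed in Sections \ref{ZFCexample} and \ref{diamondexample} are likely to enter in an essential way.
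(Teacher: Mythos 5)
The first half of your argument is sound and in fact slightly stronger than what the paper uses: by fragmentability you control the $d$-diameter of a whole cylinder $\{[(a,z),v]:z\in s(a)^\frown\Delta\}$ above each $a\in A_t$, and the limit/lower-semicontinuity computation correctly transfers this to the fiber, giving $\mathrm{diam}_d(I_x)\leq\varepsilon$ for the $x$ produced by (3') at that scale. But the step you yourself flag as unresolved is precisely the heart of the matter, and your proposed completion goes in the wrong direction. There is no need to encode several scales at a single $a$ (so the ``nesting'' worry is vacuous: each $a$ lies in exactly one $A_t$, hence receives exactly one value $s(a)$), and no special features of the constructions in Sections \ref{ZFCexample} and \ref{diamondexample} enter --- property (3') alone suffices. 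The missing idea is to let the scale depend on the index $t$: for $a\in A_t$ demand $\delta\bigl(a+s(a)^\frown(0,0,\ldots),\,a+s(a)^\frown(1,1,\ldots)\bigr)<4^{-|t|}$ (endpoint control is enough; your full-diameter control also works), form $X^t_s=\{a\in A_t: s(a)=s\}$, and apply (3') \emph{once}. The resulting single $x\in B$ then satisfies, by your limit argument, $\delta(x\oplus t^0,x\oplus t^1)\leq 4^{-|t|}$ for \emph{every} $t\in T$, where $t^i=q(t^\frown(i,i,\ldots))$.

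The second missing ingredient is how these estimates at different $t$ combine: for fixed $m$, the pairs $\{[t^0,t^1]: |t|=m\}$ are exactly the $2^m$ consecutive dyadic intervals $[(k-1)2^{-m},k2^{-m}]$ covering $[0,1]$, with the right endpoint of each equal to the left endpoint of the next (this uses the identifications made by $q$, i.e.\ exactly where $\LL_1$ differs from $\LL_0$). Chaining through these shared endpoints with the triangle inequality gives $\delta(x\oplus 0,x\oplus 1)\leq 2^m\cdot 4^{-m}=2^{-m}$ for every $m$, contradicting $\delta(x\oplus 0,x\oplus 1)>0$. Your fixed-$\varepsilon$ argument uses only $t=\emptyset$ and discards all deeper $t$'s, so it cannot see this chaining; without it (or some equivalent device making one $x$ work at all scales simultaneously) the proof is not complete, since a positive l.s.c.\ fragmenting metric could perfectly well give the fiber $I_x$ small but nonzero diameter.
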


\begin{proof}
First, $\LL_1$ is compact being a continuous image of $\LL_0$. If $\LL_1$ is RN compact, then there exists a lower semicontinuous metric $\delta:\LL_1\times\LL_1\To\mathbb{R}$ which fragments $\LL_1$. Given $a\in A$ and $z\in\Delta$ let us denote by $a+z$ the unique point of $\LL_1$ of the form $a+z = [(a,z),v]$. By the fragmentability condition, whenever $a\in A_t$ we can find $ s(a)\in T$ such that 
$$\delta\left( a+s(a)^\frown(0,0,\ldots), a + s(a)^\frown(1,1,\ldots)\right)< \frac{1}{4^{|t|}}.$$
Let $X_s^t = \{a\in A_t : s(a)=s\}$, so that $A_t = \bigcup_{s\in T}X_s^t$ for every $t\in T$. We are in the position to apply the fundamental property (3') of our basic space, so that we can find $x\in B$ such that
$C_x \cap X^t_{\psi(x)[t]}$ is infinite for all $t\in T$. This means that for every $t\in T$ we can find an infinite sequence $\{a_n\}\subset C_x\cap A_t$ such that $s(a_n) = \psi(x)[t]$ for every $n$. Now, for every $\xi\in [0,1]$ let us denote $x\oplus \xi = [x,v]\in\LL_1$, where $v_x = \xi$ and $v_y = f_y(x)$ for $y\in B\setminus \{x\}$. If we remember the definition of $f_x$ and $g_x$, we notice that
$$f_x\left( a_n + \psi(x)[t]^\frown (0,0,0,\ldots) \right) = q(t^\frown(0,0,0,\ldots)) =: t^0$$
$$f_x\left( a_n + \psi(x)[t]^\frown (1,1,1,\ldots) \right) = q(t^\frown(1,1,1,\ldots)) =: t^1$$

Now, taking limits when $n\rightarrow \infty$, $a_n + \psi(x)[t]^\frown (i,i,i,\ldots)\rightarrow x\oplus\xi^i$ where, by looking at the $x$-coordinate, $$\xi^i= \lim_n  f_x\left( a_n + \psi(x)[t]^\frown (i,i,i,\ldots) \right) = q(t^\frown(i,i,i,\ldots)) = t^i$$
Using the lower semicontinuity\footnote{We are using the following property of a lower semicontinuous metric, which is a direct consequence of Definition \ref{maindefinitions}: if $x_n\to x$, $y_n\to y$ and $\delta(x_n,y_n)\leq \varepsilon$ for every $n$, then $\delta(x,y)\leq \varepsilon$.} of $\delta$, we conclude that
$$\delta(x\oplus t^0,x\oplus t^1) \leq \frac{1}{4^{|t|}}$$
and this happens for every $t\in T$. Now fix $m\in\mathbb{N}$, and observe that
$$\left\{ [t^0,t^1] : t\in T,\ |t| = m\right\} = \left\{ [(k-1)2^{-m}, k2^{-m}] : k=1,\ldots,2^m\right\}$$ 
so we can apply the triangle inequality of the metric $\delta$ and we obtain that
$$\delta(x\oplus 0, x\oplus 1) \leq 2^m \frac{1}{4^m} = \frac{1}{2^m}$$
but this happening for every $m$ contradicts the fact that $\delta(x\oplus 0,x\oplus 1) >0$.
\end{proof}

\begin{remark}{\rm Note that the above proof does not work for $\LL_0$ because
$$\left\{ [t^\frown(0,0,0,\ldots), t^\frown(1,1,1,\ldots)]: t\in T,\ |t| = m\right\}$$
do not form consecutive intervals, that is, their left ends are not equal to any of their right ends, and so the triangle
inequality cannot be applied as in the proof above.}
\end{remark}

\section{A basic space of the form of the Ciesielski-Pol compact}\label{ZFCexample}

Remember that a set $S\subset \mathbb{R}$ is called a Bernstein set if both $S\cap P$ and $S\setminus P$ are nonempty  for every perfect set $P\subset \mathbb{R}$. The classical result of Bernstein is that such a set exist: it is constructed by transfinite induction by enumerating all possible perfect subsets of $\mathbb{R}$ as $\{P_\xi : \xi<\mathfrak{c}\}$ and at every step $\xi$ choosing new points $x_\xi,y_\xi\in P_\xi$ and declaring $x_\xi\in S$ and $y_\xi\not\in S$. A minor modification of this argument yields the existence of $\mathfrak c$ many disjoint Bernstein sets: write $\mathfrak c = \bigcup\{ I_\alpha : \alpha <\mathfrak c\}$ with $|I_\alpha|=\mathfrak c$, and assume that for every $\alpha$, $\{P_\xi : \xi\in I_\alpha\}$ enumerates all perfect subsets of $\mathbb{R}$; then at step $\xi\in I_\alpha$, choose new $x_\xi,y_\xi\in P_\xi$ and declare $x_\xi\in S_\alpha$ and $y_\xi\not\in S_\alpha$.\\

The basic space that we are going to construct is of the form $K = \bigcup_{n\in\mathbb{N}} A_n \cup B \cup \{\infty\}$ where the sets $A_n$ and the set $B$ are pairwise disjoint Bernstein subsets of $\mathbb{R}$. All points of $A=\bigcup_{n\in\N} A_n$ will be of course isolated, the space $A\cup B$ will be locally compact (its topology will be a refinement of the topology inherited from $\mathbb{R}$) and $K$ its one-point compactification. In order to describe completely our basic space we need to say which are the sets $C_x$ for $x\in B$ (that will provide a basis of neighborhoods of such $x\in B$: all $H\cup\{x\}$ where $H$ is cofinite in $C_x$)  and also which is the function $\psi:B\To \mathbb{N}$.\\

All topological notions below refer to the standard topology on $\R$.
Let $(F_\alpha)_{\alpha<\cc}$ be an enumeration of all 
sequences $(F_\alpha(n,m))_{n,m\in\N}$  of countable
subsets of $\R$ such that $F_\alpha(n,m)\subseteq A_n$  for each
$n,m\in\N$ and
$$\boxplus F_\alpha=\bigcap_{n\in\N}\bigcup_{m\in\N} {\overline {F_\alpha(n,m)}}$$
contains a perfect set.

We construct $\{x_\alpha: \alpha<\cc\}\subseteq B$, the sets $C_{x_\alpha}$ and 
and $\psi(x_\alpha)$ by induction on $\alpha<\cc$.
Given $\alpha$ we pick 
$$x_\alpha\in \boxplus F_\alpha\setminus \{x_\beta: \beta<\alpha\}.$$
We define $\psi(x_\alpha)=(m_n)_{n\in\N}$ to be such a sequence that
$x_\alpha\in \bigcap_{n\in\N}{\overline{F_\alpha(n,m_n)}}$ which exists since $x_\alpha\in \boxplus F_\alpha$.
Then take as $C_{x_\alpha}$ the terms of a  sequence which converges in $\R$ to $x_\alpha$ and such that for every $n$, $C_{x_\alpha}$ contains infinitely many elements from $F_\alpha(n,m_n)$. After the inductive procedure is finished, For the remaining elements
$x\in B\setminus \{x_\alpha : \alpha<\mathfrak{c}\}$, we define $\psi(x)$ to be any arbitrary value, and $C_x$ any sequence of elements of $A$ convergent to $x$.\\

We finally check that the key property (3) of basic spaces is satisfied. Suppose that we have $A_n = \bigcup X^n_m$ for every $n$. For every $n$, the set $\bigcup \overline{X^n_m}$ is a Borel set and it intersects every perfect set since it contains the Bernstein set $A_n$. Therefore $\bigcup \overline{X^n_m}$ is cocountable in $\mathbb{R}$ (every uncountable Borel set contains a perfect set). Therefore 
$$\bigcap_{n\in\N}\bigcup_{m\in\N} {\overline {X^n_m}}$$
is cocountable and in particular, contains a perfect set.
We choose countable sets $F(n,m)\subset X^n_m$ with $\overline{F(n,m)} = \overline{X^n_m}$. Then, since
$$\boxplus F_\alpha=\bigcap_{n\in\N}\bigcup_{m\in\N} {\overline {X^n_m}}$$
contains a perfect set, this sequence must appear in our enumeration as $F = F_\alpha$ for some $\alpha<\mathfrak{c}$. Let us see that $x=x_\alpha$ is the element of $B$ that we are looking for. Indeed, by the way we chose $C_{x_\alpha}$ and $\psi(x_\alpha)$, we know that $C_{x_\alpha}$ contains infinitely many elements from $F_\alpha(n,m_n) \subset X^n_{m_n} = X^n_{\psi(x_\alpha)[n]}$, for every $n$. This finishes the proof.

\section{A basic space from a ladder system under $\diamondsuit$}\label{diamondexample}

\begin{definition} $(D_\alpha)_{\alpha\in \omega_1}$ is called a $\diamondsuit$-sequence if and only if for every
$X\subseteq \omega_1$ the set
$$\{\alpha\in \omega_1: D_\alpha=X\cap\alpha\}$$
is stationary (i.e., intersects all closed in the order topology and unbounded subsets of $\omega_1$).
$\diamondsuit$ is a statement that a $\diamondsuit$-sequence exists.
\end{definition}

In the basic space that we construct now, $K= \bigcup_{n=1}^\infty A_n \cup B \cup C$, we will have
\begin{enumerate}
\item $A_n$ is the set of all countable ordinals of the form $\alpha + n$, where $\alpha$ is a limit ordinal. Hence $A = \bigcup_1^\infty A_n$ is the set of all countable successor ordinals.
\item $B$ is the set of all countable limit ordinals, except 0.
\item $C = \{\omega_1\}$.
\end{enumerate} 

The sets $\{C_x : x\in B\}$ will be a ladder system in $\omega_1$. That is, for every $x\in B$, $C_x =\{\beta_1,\beta_2,\ldots\}\subset A$ with $\beta_1<\beta_2<\cdots$ and $\sup\{\beta_n : n<\omega\} = x$. Once the ladder system is given, the topology considered on $K$ is such that each point of $A$ is isolated, a basis of neighborhoods of $x\in B$ are the sets $H\cup\{x\}$ with $H$ cofinite in $C_x$, and $K$ is the one-point compactification of the locally compact space $A\cup B$.\\

Now, we have to explain how to find a ladder system $\{C_x : x\in B\}$ and a function $\psi:B\To\mathbb{N}^\mathbb{N}$ so that the fundamental property (3) of a basic space is satisfied.

So let $(D_\alpha)_{\alpha<\omega_1}$ be a $\diamondsuit$-sequence. 
Let $x\in B$. Suppose first that:
\begin{enumerate}
\item $x\neq \omega$,
\item $D_x\subset A$,
\item $\sup(D_x\cap A_n) = x$ for every $n$, and 
\item there exists $f:\N\rightarrow\N$ such that $D_x\cap\omega=\{2^n3^{f(n)}: n=1,2,3,\ldots\}$.
\end{enumerate}
If all this conditions hold, we define $\psi(x)=f$ and  $C_x$ to be some increasing sequence of elements of $D_x\setminus \omega$ whose supremum is $x$ and which contains infinitely many elements of $A_n$ for every $n$. For the remaining $x\in B$ that do not satisfy the conditions above, we define $C_x$ and $\psi(x)$ in an
arbitrary way.\\

Now suppose that $A_n = \bigcup_m X_m^n$ as in condition (3) of a basic space. For every $n$, choose $m_n\in\N$ such that
$X^n_{m_n}$ is uncountable. Let $G_n$ be the set of all limit ordinals which are the supremum of some sequence contained in $X^n_{m_n}$. Notice that this is a closed and unbounded subset of $\omega_1$. Define
$$X=\{2^n3^{m_n}: 1\leq n <\omega\}\cup\bigcup_{n=1}^\infty (X_{m_n}^n\setminus\omega).$$
By the choice of $(D_\alpha)_{\alpha\in L(\omega_1)}$, there is $\alpha>\omega$, 
$\alpha\in \bigcap_{n=1}^\infty G_n$ such that $X\cap \alpha=D_\alpha$.
Then $x=\alpha$ is the element that we were looking for.

\end{document}